\definecolor{lime}{HTML}{A6CE39}
\DeclareRobustCommand{\orcidicon}{%
	\begin{tikzpicture}
	\draw[lime, fill=lime] (0,0) 
	circle [radius=0.16] 
	node[white] {{\fontfamily{qag}\selectfont \tiny ID}};
	\draw[white, fill=white] (-0.0625,0.095) 
	circle [radius=0.007];
	\end{tikzpicture}
	\hspace{-2mm}
}
\xdef\csname orcid\x\endcsname{\noexpand\href{https://orcid.org/\csname orcidauthor\x\endcsname}{\noexpand\orcidicon}}
\DeclareMathOperator{\Tw}{Tw}
\DeclareMathOperator{\Hom}{Hom}
\DeclareMathOperator{\Ext}{Ext}
\DeclareMathOperator{\Gal}{Gal}
\DeclareMathOperator{\Sel}{Sel}
\DeclareMathOperator{\cyc}{cyc}
\DeclareMathOperator{\Iw}{Iw}
\newtheorem{theorem}{Theorem}[section]
\newtheorem*{theorem*}{Theorem}
\newtheorem{lemma}[theorem]{Lemma}
\newtheorem{corollary}[theorem]{Corollary}
\numberwithin{equation}{section}
\theoremstyle{remark}
\newtheorem{remark}[theorem]{Remark}
\newcommand\EatDot[1]{}
\renewcommand{\Col}{\mathrm{Col}}
\newcommand{\vp}{\varphi}
\newcommand{\ff}{\mathfrak{F}}
\newcommand{\fZ}{\mathfrak{H}^1_{f,i}}
\newcommand{\tZ}{\mathfrak{H}^2_{f,i}}
\newcommand{\fZgj}{\mathfrak{H}^1_{g,j}}
\newcommand{\Image}{\mathrm{Image}}
\newcommand{\loc}{\mathrm{loc}}
\newcommand{\Cp}{\mathbb{C}_p}
\newcommand{\cZ}{\mathbb{H}^1_{f,i}}
\newcommand{\cZgj}{\mathbb{H}^1_{g,j}}
\newcommand{\QQ}{\mathbb{Q}}
\newcommand{\ZZ}{\mathbb{Z}}
\newcommand{\Qp}{\mathbb{Q}_p}
\newcommand{\Zp}{\mathbb{Z}_p}
\newcommand{\NN}{\mathbb{N}}
\newcommand{\lb}{\llbracket}
\newcommand{\rb}{\rrbracket}
\newcommand{\HIw}{H^1_{\mathrm{Iw}}}
\definecolor{Green}{rgb}{0.0, 0.5, 0.0}
\newcommand{\bZ}{\mathbf{z}}
\newcommand{\cO}{\mathcal{O}}
\newcommand{\cC}{\mathcal{C}}
\newcommand{\Qcyc}{\QQ_{\cyc}}
\newcommand{\Qcycp}{\QQ_{\cyc,p}}
\newcommand{\Char}{\mathrm{Char}}
\newcommand{\tor}{{\Lambda-\mathrm{tor}}}
\newcommand{\of}{{\overline{f}}}
\title[Fine and signed Selmer groups]{On fine Selmer groups and signed Selmer groups of elliptic modular forms}
\let\@wraptoccontribs\wraptoccontribs
\author[A. Lei]{Antonio Lei\orcidA{}}
\address[Lei]{D\'epartement de Math\'ematiques et de Statistique\\
Universit\'e Laval, Pavillion Alexandre-Vachon\\
1045 Avenue de la M\'edecine\\
Qu\'ebec, QC\\
Canada G1V 0A6}
\email{antonio.lei@mat.ulaval.ca}
\author[M.F.~Lim]{Meng Fai Lim\orcidB{}}
\address[Lim]{School of Mathematics and Statistics \& Hubei Key Laboratory of Mathematical Sciences\\ Central China Normal University\\ Wuhan\\ 430079\\ P.R.China.}
\email{limmf@mail.ccnu.edu.cn}
\thanks{The authors' research is partially supported by:  the NSERC Discovery Grants Program RGPIN-2020-04259 and RGPAS-2020-00096 (Lei), the National Natural Science Foundation of China under Grant No.\ 11771164 and the Fundamental Research Funds for the Central Universities of CCNU under grant CCNU20TD002 (Lim). 
}
\subjclass[2020]{11R23 (primary); 11F11, 11R18 (secondary) }
\keywords{Elliptic modular forms, fine Selmer groups, signed Selmer groups}
\begin{document}
\begin{abstract}
Let $f$ be an elliptic modular form and $p$ an odd prime that is coprime to the level of $f$. We study the link between divisors of the characteristic ideal of the $p$-primary fine Selmer group of $f$ over the cyclotomic $\mathbb{Z}_p$ extension of $\mathbb{Q}$ and the greatest common divisor of signed Selmer groups attached to $f$ defined using the theory of Wach modules. One of the key ingredients of our proof is a  generalization of a result of Wingberg on the structure of fine Selmer groups of abelian varieties with supersingular reduction at $p$ to the context of modular forms.
\end{abstract}

\maketitle

\section{Introduction}

Let $p$ be a fixed odd prime number and $f$ a fixed normalized eigen-cuspform of level $N$ and weight $k\ge 2$, with $p\nmid N$. Let $K$ denote the completion of the Hecke field of $f$ at a prime above $p$ and write $\cO$ for its ring of integers. Let  $V_f$ denote the $K$-adic $G_\QQ$-representation of $f$ defined by Deligne \cite{deligne69}. We fix a Galois-stable $\cO$-lattice $T_f$ inside $V_f$ and write $A_f=V_f/T_f$.
We write $\of$ for the conjugate modular form of $f$ and we write $V_\of$, $T_\of$ and $A_\of$ for the corresponding $G_\QQ$-modules attached to $\of$ with  $T_\of$ chosen to be $\Hom_\cO(T_f,\cO)(1-k)$, where $M(j)$ denotes the $j$-th Tate twist of a $G_\QQ$-module $M$.

Let $\Qcyc$ be the cyclotomic $\Zp$-extension of $\QQ$ and let $\Gamma$ denote the Galois group $\Gal\left(\Qcyc/\QQ\right)$.
The Iwasawa algebra $\Lambda=\cO\lb\Gamma\rb $ is defined to be $\varprojlim \cO[\Gamma/\Gamma^{p^n}]$, where the connecting maps are projections.
After fixing a topological generator $\gamma$ of $\Gamma$, there is an isomorphism of rings $\Lambda\cong\cO\lb X\rb $, sending $\gamma$ to $X+1$.
Given a $\Lambda$-module $M$, denote its Pontryagin dual by $M^\vee := \Hom_{\cO}\left( M, K/\cO\right)$. {We write $M^{\iota}$ for the $\Lambda$-module which is $M$ as an $\cO$-module equipped with a $\Gamma$-action given by $\gamma\cdot_{\iota} m = \gamma^{-1} m$. Finally, if $F\in\Lambda= \cO\lb X\rb$, we write $F^\iota$ for the power series  $F(\frac{1}{1+X}-1)$.}

 For $g=f$ or $\of$ and $j\in\ZZ$, let $\Sel_0(A_g(j)/\Qcyc)$ denote the fine Selmer group of $A_g(j)$ over $\Qcyc$ (whose definition will be reviewed in \S\ref{S:fine}). The fine Selmer groups of abelian varieties were first systematically studied by Coates and Sujatha in \cite{CoatesSujatha_fineSelmer}, and a little later by Wuthrich \cite{Wut-JAG}. Various conjectures on the structure of these groups have been formulated and they are still wide open to this date. See also \cite{KS,leisuj, lim2020control, LM16, matarfine} for some recent results on the topic. In the context of modular forms, the fine Selmer groups have been studied in \cite{aribam2014mu,Jha12,jhasuj,HKLR}. 
 
 It has been proved by Kato \cite{Kato} that $\Sel_0(A_g(j)/\Qcyc)^\vee$ is a finitely generated torsion $\Lambda$-module. 
Let $\ff_{g,j}$ denote a choice of characteristic element (i.e. a generator of the characteristic ideal $\Char_\Lambda\Sel_0(A_g(j)/\Qcyc)^\vee$). The main goal of the present article is to study the divisors of $\ff_{f,i}$ and $\ff_{\of,k-i}$ for a fixed integer $i$. These specific twists are considered due to the perfect pairing
\[
T_f(i)\times A_\of(k-i)\rightarrow \mu_{p^\infty}
\]
of $G_\QQ$-modules.

We are able to relate  the divisors of $\ff_{f,i}$ and $\ff_{\of,k-i}$ to the greatest common divisors of the signed Selmer groups attached to  $\of(k-i)$ defined in \cite{LLZ0,LLZ0.5} by the theory of Wach modules (see \S\ref{S:signed-Sel} where the definition of these groups are reviewed). Let us write $\ff_{\of,k-i}^\sharp$ and $\ff_{\of,k-i}^\flat$ for a choice of characteristic elements of the Pontryagin duals of these signed Selmer groups. Under a mild hypothesis on the local representations at bad primes and a hypothesis on the validity of Kato's Iwasawa main conjecture for $\of(k-i)$ (labelled \textbf{(H0)} and \textbf{(H-IMC}) respectively in the main text), we show that if $F$ is an irreducible element of $\Lambda$ that is outside a certain explicit set of linear factors,   $F^\iota\nmid \ff_{f,i}$ and  $F\nmid \ff_{\of,k-i}$   if and only if  $F\nmid\gcd\left(\ff_{\of,k-i}^\sharp,\ff_{\of,k-i}^\flat\right)$. (See Theorem~\ref{thm:LS} for the precise statement of this result.)

In the case where $T_f$ is the Tate module of an elliptic curve $E/\QQ$ with good supersingular reduction at $p$, a similar result has been proved in \cite{leisuj}. One of the key ingredients of the proof given in \cite{leisuj} is a link between the fine Selmer group of $E$ over $\Qcyc$ and the maximal $\Lambda$-torsion submodule of  the Pontryagin dual of the $p$-primary  Selmer group of $E$ over $\Qcyc$, which was  proved by Wingberg  \cite{wingberg} (see also \cite{matar}, where an alternative proof is given). In the present paper, we prove Theorem~\ref{thm:LS} by first establishing analogues of Wingberg's result in the context of modular forms (see Theorems~\ref{thm:Wing} and \ref{thm:Wing2}). In \cite{wingberg}, Wingberg worked with Selmer groups defined using flat cohomology,  whereas Matar worked with Selmer groups defined using Galois cohomology in \cite{matar} instead. In the present paper,  the latter definition is used.  Our proof is very different from the ones employed in both \cite{wingberg} and \cite{matar}. We make use of Nekov\'a\v{r}'s spectral sequence, which seems to give a somewhat simpler and more general proof than the previous proofs available in the literature. We hope that these results may be of independent interest.

\section{Definitions of Selmer groups and related objects}
\subsection{Fine Selmer groups}\label{S:fine}
Let $L_n$ be the unique subextension of $\Qcyc$ such that $[L_n:\QQ]=p^n$. Given an algebraic extension $L$ of $\QQ$, we write $S(L)$ for the primes of $L$ lying above $pN$ as well as the archimedean primes. We write $G_S(L)$ for the Galois group of the maximal algebraic extension of $L$ that is unramified outside $S(L)$.

For $g\in\{f,\of\}$ and $j\in\ZZ$, we define the fine Selmer group of $A_g(j)$ over $L$ to be
\[
\Sel_0(A_g(j)/L)=\ker\left(H^1(G_S(L),A_g(j))\rightarrow\prod_{v\in S(L)} H^1(L_v,A_g(j))\right).
\]
Recall that $\Sel_0(A_g(j)/\Qcyc)^\vee$ is torsion over $\Lambda$ and we write $\ff_{g,j}$ for a choice of a generator of the characteristic ideal $\Char_\Lambda\Sel_0(A_g(j)/\Qcyc)^\vee$.

The classical Selmer group of $A_g(j)$ over $L$ is defined to be
\[
\Sel(A_g(j)/L)=\ker\left(H^1(G_S(L),A_g(j))\rightarrow\prod_{v\in S(L)} \frac{H^1(L_v,A_g(j))}{H^1_\mathrm{f}(L_v,A_g(j))}\right),
\]
where $H^1_\mathrm{f}(L_v,A_g(j))$ is defined as in \cite[\S3]{blochkato}.
\subsection{Wach modules and signed Coleman maps}
We recall the definition of signed Coleman maps from \cite{LLZ0,LLZ0.5}, which generalize those studied in \cite{kobayashi03,sprung09} in the context of elliptic curves with supersingular reduction at $p$. Here, we do not require $p$ to be a non-ordinary prime for $f$. The only requirement is that $p$ is coprime to $N$ so that the representation $V_f$ is crystalline at $p$.

We shall write for $g\in\{f,\of\}$, $j\in\ZZ$ and $m\in\{1,2\}$,
\[
\mathfrak{H}^m_{g,j}=\varprojlim_n H^m(L_n,T_g(j)), \quad  \mathbb{H}^m_{g,j}=\varprojlim H^m(L_{n,p},T_g(j)),
\]
where the connecting maps are corestrictions and we have abused notation by writing $p$ for the unique prime of $L_n$ above $p$.
We write $\loc_p:\fZgj\rightarrow \cZgj$ for the localization map.

Let $T=T_g(k-1)$, where $g\in\{f,\of\}$. We write $\NN(T)$ for the Wach module of $T$. See \cite[\S II.1]{berger04} for the precise definition of $\NN(T)$. Roughly speaking, it is a filtered $\vp$-module over the ring $\cO\lb\pi\rb$, where $\pi$ is an element in the ring of of Witt vectors of $\displaystyle\varprojlim_{x\mapsto x^p}\Cp$,  given by $[(1, \zeta_p,\zeta_{p^2},...)]-1$ and $\zeta_{p^n}$ is a primitive $p^n$-th root of unity in $\Cp$ such that $\zeta_{p^{n+1}}^p=\zeta_{p^n}$. One may regard $\pi$ as a formal variable  equipped with an action of $\vp$ and $\Gamma_0=\Gal(\Qp(\mu_{p^\infty})/\Qp)$ given by $\vp(\pi)=(1+\pi)^p-1$ and $\sigma(\pi)=(1+\pi)^{\chi_{\cyc}(\sigma)}-1$ for $\sigma\in\Gamma_0$,  where $\chi_{\cyc}$ is the $p$-adic cyclotomic character.  Let  $\HIw(\Qp(\mu_{p^\infty}),T)=\varprojlim_n H^1(\Qp(\mu_{p^n}),T)$, where the connecting maps are corestrictions.  Berger proved in \cite[Appendix A]{berger03} that there is an isomorphism of $\cO\lb\Gamma_0\rb$-modules
\[
\HIw(\Qp(\mu_{p^\infty}),T)\cong \NN(T)^{\psi=1},
\]
where $\psi$ is a canonical left inverse of $\vp$ and the superscript $\psi=1$ denotes the kernel of $\psi-1$.

We recall from  \cite{LLZ0,LLZ0.5} that for  $\bullet\in\{\sharp,\flat\}$, on choosing an appropriate basis of $\NN(T_g(k-1))$, one can define signed Coleman maps
\[
\Col_g^\bullet: \HIw(\Qp(\mu_{p^\infty}),T_g(k-1))\rightarrow \cO\lb\Gamma_0\rb,
\]
decomposing Perrin-Riou's big logarithm map defined in \cite{perrinriou94}. 

We   define the twisted Coleman maps
\[
\Col_{g,j}^\bullet:\cZgj\rightarrow \Lambda
\]
as follows. We have the $\cO$-isomorphism
\[
\HIw (\Qp(\mu_{p^\infty}),T_g(j))\stackrel{ e^{k-1-j}}{\longrightarrow} \HIw(\Qp(\mu_{p^\infty}),T_g(k-1)),
\]
where $e$ is a basis of the $G_{\Qp}$-representation $\Zp(1)$, as given in \cite[\S A.4]{perrinriou95}. Let $\Tw$ be the $\cO$-linear automorphism on $\cO\lb\Gamma_0\rb$ sending $\sigma\in \Gamma_0$ to $\chi_{\cyc}(\sigma)\sigma$. We can define a $\Lambda$-morphism
\[
\Tw^{i-k+1}\circ\ \Col_g^\bullet\circ e^{k-1-j}: \HIw(\Qp(\mu_{p^\infty}),T_g(j))\rightarrow \cO\lb\Gamma_0\rb
\]
and obtain $\Col_{g,j}^\bullet$ on taking the trivial isotypic component of $\Delta:=\Gal(\Qp(\mu_p)/\Qp)$.
\subsection{Signed Selmer groups and main conjectures}\label{S:signed-Sel}
Let $H_\bullet^1(\Qcycp,A_\of(k-i))$ denote the orthogonal complement of $\ker\Col_{f,i}^\bullet$ under the local Tate pairing
\[
\HIw(\Qp(\mu_{p^\infty}),T_f(i))\times H^1(\Qcycp,A_\of(k-i))\rightarrow \Qp/\Zp.
\]
We  define the signed Selmer groups $\Sel^\bullet(A_\of(k-i)/\Qcyc)$ as
\[
\ker\left(H^1(G_S(\Qcyc),A_\of(k-i))\rightarrow\frac{ H^1(\Qcycp,A_\of(k-i))}{H_\bullet^1(\Qcycp,A_\of(k-i))}\times \prod_{v\nmid p}\frac{ H^1(\QQ_{\cyc,v},A_\of(k-i))}{H_\mathrm{f}^1(\QQ_{\cyc,v},A_\of(k-i))}\right).
\]
Note that when  $A_\of(k-i)$ is given by $E[p^\infty]$ for some elliptic curve $E/\QQ$ with good supersingular reduction at $p$ or when $a_p(\of)=0$, one may choose an appropriate basis of the Wach module so that these Selmer groups coincide with the ones studied in \cite{kobayashi03,sprung09,lei11compositio}  (see \cite[\S\S5.2-5.4]{LLZ0}).

Let $\bZ_{f,i}$ be Kato's zeta element inside $\HIw(\Qcyc,T_f(i))\otimes_{\Zp}\Qp$ as defined in \cite{Kato}. It is conjectured that $\bZ_{f,i}\in\HIw(\Qcyc,T_f(i))$ (see Conjecture~12.10 of op. cit.). Assuming this (which we shall do for the rest of the article), recall from \cite[Eqn. (61)]{LLZ0}  the Poitou--Tate exact sequence
\begin{equation}\label{eq:PT-signed}
    0\rightarrow\frac{\fZ}{\Lambda \bZ_{f,i}}\rightarrow\frac{\Image\left(\Col_{f,i}^\bullet\right)}{(L_p^\bullet)}\rightarrow \Sel^\bullet(A_\of(k-i)/\Qcyc)^\vee\rightarrow\Sel_0(A_\of(k-i)/\Qcyc)^\vee\rightarrow 0,
\end{equation}
where $L_p^\bullet=\Col_{f,i}^{\bullet}\circ \loc_p(\bZ_{f,i})$ is the signed $p$-adic $L$-function associated to $\of(k-i)$.
If $L_p^\bullet\ne0$, then $\Sel^\bullet(A_\of(k-i)/\Qcyc)^\vee$ is a finitely generated torsion $\Lambda$-module. We write $\ff_{\of,k-i}^\bullet$  for a characteristic element of this module.

Kato \cite{Kato} proved that the first and fourth terms of \eqref{eq:PT-signed} are $\Lambda$-torsion. He further formulated the following Iwasawa main conjecture:
\begin{equation}\label{eq:Kato}
  \Char_\Lambda\fZ/\Lambda \bZ_{f,i}=\Char_\Lambda\Sel_0(A_\of(k-i)/\Qcyc)^\vee=(\ff_{\of,k-i}). 
\end{equation}
 It follows from \eqref{eq:PT-signed} that it is equivalent to
\begin{equation}\label{eq:IMC}
\Char_\Lambda\Sel^\bullet(A_\of(k-i)/\Qcyc)^\vee=\Char_\Lambda\Image\left(\Col_{f,i}^\bullet\right)/(L_p^\bullet),
\end{equation}
provided that $L_p^\bullet\ne 0$. 

For the rest of the article, we shall assume that the following hypothesis holds:

\vspace{0.25cm}
\noindent \textbf{(H-IMC)} Kato's Iwasawa main conjecture \eqref{eq:Kato} holds and that both $L_p^\sharp$ and $L_p^\flat$ are non-zero.
\vspace{0.25cm}

\begin{remark}
We say a few words on the hypothesis \textbf{(H-IMC)}. In his seminal work \cite{Kato}, Kato  established that the inclusion ``$\subseteq$'' in \eqref{eq:Kato} holds after tensoring by $\Qp$ under the assumption that the image of the representation $T_f|_{G_{\QQ(\mu_{p^\infty})}}$ contains a copy of $\mathrm{SL}_2(\Zp)$, provided that certain local terms vanish (see in particular Theorem~12.5(4) of op. cit.). Note that in \cite[Conjecture~12.10]{Kato}, Kato's main conjecture is formulated in terms of étale cohomology groups. Kobayashi and Kurihara showed that it can be recast in terms of Galois cohomology groups and fine Selmer groups  (see for example \cite[Proposition~7.1]{kobayashi03}).  The reverse inclusion of \eqref{eq:Kato} has been established in the monumental work of Skinner-Urban \cite{skinnerurbanmainconj} for a $p$-ordinary modular form (under certain hypotheses, see Theorem~1 of op. cit. for details). In the non-ordinary form, there have been several recent breakthroughs in this reverse direction (for instance, see \cite{CCSsprung,castellaLiuWan,FW}).
Finally, the work \cite{LLZ0} has supplied many sufficient conditions  for the non-vanishing of $L_p^\sharp$ and $L_p^\flat$ (see in particular Corollary~3.29 and Proposition~3.39 in op. cit.). In view of these developments, it seems reasonable to assume that the hypothesis \textbf{(H-IMC)}, on which our main results are reliant,  holds.
\end{remark}

\subsection{Images of signed Coleman maps}

We review an explicit description of the images of these Coleman maps. Recall that $\gamma$ is a fixed topological generator of $\Gamma$ giving the identification $\Lambda=\cO\lb X\rb$ via $X=\gamma-1$. Let $u=\chi_{\cyc}(\gamma)$. It was proved in \cite[\S5A]{LLZ0.5}\footnote{In loc. cit., it is assumed that $f$ is non-ordinary at $p$, meaning that $a_p(f)$ is a non-unit in $\cO$. But the same calculations still apply to the ordinary case. The only difference is that the linear relations describing the images of the Coleman maps will be slightly different. See Remark 1.10 in \cite{LLZ0.5}.} that if $\eta$ is a character on $\Delta$, then there exist constants $c_{\eta,j}\in K$ such that
\[
e_\eta\Image(\Col_{f}^\sharp\oplus \Col_f^\flat)\otimes_\cO K=\{(F,G)\in \Lambda^2:F(u^j-1)=c_{\eta,j}G(u^j-1),0\le j\le k-2\}.
\]
In particular, this tells us that there is an isomorphism of $\Lambda$-modules
\[\frac{\Lambda^2\otimes_\cO K}{
e_\eta\Image(\Col_{f}^\sharp\oplus \Col_f^\flat)\otimes_\cO K}\cong \frac{\Lambda\otimes_\cO  K}{\prod_{j=0}^{k-2}(X-u^j+1)},
\]
where $e_\eta$ denotes the idempotent attached to $\eta$.
Integrally, we have:
\begin{lemma}\label{lem:image-pair}
There is a pseudo-isomorphism of $\Lambda$-modules
\[
\frac{\Lambda^2}{e_\eta\Image(\Col_{f}^\sharp\oplus \Col_f^\flat)}\sim \frac{\Lambda}{\prod_{j=0}^{k-2}(X-u^j+1)}.
\]
\end{lemma}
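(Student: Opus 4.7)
The plan is to lift the rational isomorphism of $\Lambda\otimes_\cO K$-modules displayed just above the lemma to a $\Lambda$-linear pseudo-isomorphism by constructing an explicit map via Lagrange interpolation. Set $J:=e_\eta\Image(\Col_f^\sharp\oplus\Col_f^\flat)\subseteq\Lambda^2$, $M_1:=\Lambda^2/J$, and $P:=\prod_{j=0}^{k-2}(X-u^j+1)$. The roots $u^j-1$ all lie in $p\Zp$ and are pairwise distinct (since $u=\chi_{\cyc}(\gamma)$ has infinite order in $1+p\Zp$), so $P$ is a distinguished polynomial of degree $k-1$ with simple roots; in particular $\Lambda/P$ is $\cO$-free of rank $k-1$, and because $P$ is squarefree, the pseudo-isomorphism class of $\Lambda/P$ is determined by its characteristic ideal $(P)$.

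By the Chinese remainder theorem in $K[X]/(P)\cong K^{k-1}$, I would first choose a polynomial $\tilde c\in K[X]$ of degree $\le k-2$ with $\tilde c(u^j-1)=c_{\eta,j}$ for every $j$, and then clear denominators to obtain $d\in\cO\setminus\{0\}$ with $d\tilde c\in\Lambda$. Then I would define the $\Lambda$-linear map
\[
\psi\colon\Lambda^2\longrightarrow\Lambda/P,\qquad (F,G)\longmapsto dF-(d\tilde c)G\pmod P,
\]
and check, using the rational description of $J\otimes_\cO K$, that $\psi$ vanishes on $J$; thus $\psi$ descends to a $\Lambda$-linear map $\bar\psi\colon M_1\to\Lambda/P$. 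Since $P$ is distinguished, $\Lambda/P$ embeds in $(\Lambda/P)\otimes_\cO K$, and a direct calculation (together with the fact that $\Lambda$ is closed under division by a distinguished polynomial) shows $\ker\psi=(J\otimes_\cO K)\cap\Lambda^2=:J'$, i.e.\ the $\cO$-saturation of $J$ in $\Lambda^2$; consequently $\ker\bar\psi=J'/J=M_1[p^\infty]$. Taking $G=0$ shows that the image of $\psi$ contains $d\cdot(\Lambda/P)$, so $\coker\bar\psi$ is a quotient of the finite $\cO$-module $(\Lambda/P)/d(\Lambda/P)\cong(\cO/d)^{k-1}$ and is therefore pseudo-null.

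What remains is the main obstacle: showing that $M_1[p^\infty]$ is itself pseudo-null, equivalently $\mu(M_1)=0$. I expect this to follow from the explicit construction of the signed Coleman maps from a Wach module basis in \cite{LLZ0,LLZ0.5}: concretely, if $v_1,v_2$ form an appropriate $\Lambda$-basis of the $\eta$-component of $\HIw(\Qp(\mu_{p^\infty}),T_f(k-1))$ and $(F_i,G_i):=(\Col_f^\sharp\oplus\Col_f^\flat)(v_i)$ for $i=1,2$, then $J=\Lambda(F_1,G_1)+\Lambda(F_2,G_2)$, and one needs to verify that $F_1G_2-F_2G_1\not\equiv 0\pmod p$, which forces $J$ to be $\cO$-saturated in $\Lambda^2$ (so $J=J'$ and $M_1$ is already $\cO$-torsion free). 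Granting this input, $\bar\psi$ has pseudo-null kernel and cokernel and yields the desired pseudo-isomorphism $M_1\sim\Lambda/P$.
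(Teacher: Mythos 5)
Your reduction of the lemma to the single statement $\mu(\Lambda^2/J)=0$ is sound, and in fact more elaborate than necessary: since the displayed rational isomorphism preceding the lemma already determines the pseudo-isomorphism class of $\Lambda^2/J$ away from $\varpi$ (the characteristic ideal is $(P\cdot\varpi^{\mu})$ with $P$ squarefree), one only needs the vanishing of the $\mu$-invariant, which is exactly how the paper frames the proof. Your interpolation map $\psi$, the identification of $\ker\bar\psi$ with the $\cO$-torsion of $M_1$, and the finiteness of $\coker\bar\psi$ are all fine. The problem is that the step you yourself call ``the main obstacle'' is never carried out: you only say that you \emph{expect} $\mu(M_1)=0$ to follow from the Wach-module construction, and that ``one needs to verify'' $F_1G_2-F_2G_1\not\equiv 0\pmod{\varpi}$. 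That verification is not routine bookkeeping --- it is precisely the content of the lemma, and without it the proof is incomplete. (A secondary inaccuracy: $\varpi\nmid F_1G_2-F_2G_1$ would give finiteness of $M_1[\varpi^\infty]$, i.e.\ $\mu(M_1)=0$, but not that $J$ is $\cO$-saturated or that $M_1$ is $\cO$-torsion-free, as you assert; this overstatement is harmless for the conclusion but should be corrected.)

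The paper closes this gap by a softer argument that avoids any determinant computation. Writing $C=e_\eta\Image(\Col_f^\sharp\oplus\Col_f^\flat)$ and $C_\bullet=e_\eta\Image\Col_f^\bullet$, it invokes \cite[Theorem~5.10]{LLZ0.5}, which asserts that each quotient $\Lambda/C_\bullet$ has trivial $\mu$-invariant, so that $(\Lambda/C_\bullet)[\varpi]=\ker\bigl(C_\bullet/\varpi\to\Lambda/\varpi\bigr)$ is finite for both signs. Since $C\subseteq C_\sharp\oplus C_\flat$ and the reduction map $C/\varpi\to\Lambda^2/\varpi$ factors through $(C_\sharp/\varpi)\oplus(C_\flat/\varpi)$, the kernel $(\Lambda^2/C)[\varpi]$ is controlled by $\ker\Phi_\sharp\oplus\ker\Phi_\flat$ and is therefore finite, giving $\mu(\Lambda^2/C)=0$. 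To repair your write-up you should either import this cited input on the individual Coleman-map images, or genuinely carry out the mod-$\varpi$ computation of the determinant from the Wach-module basis; as it stands, the crucial claim is asserted rather than proved.
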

\begin{proof}
It is enough to show that the $\mu$-invariant of the quotient on the left-hand side vanishes. For simplicity, let us write 
\[
C=e_\eta\Image(\Col_{f}^\sharp\oplus \Col_f^\flat),\quad\text{and}\quad C_\bullet=e_\eta\Image\Col_f^\bullet
\]
for $\bullet\in\{\sharp,\flat\}$.

Consider the tautological short exact sequence
\[
0\rightarrow C\rightarrow \Lambda^2\rightarrow \Lambda^2/C\rightarrow 0.
\]
This gives the exact sequence
\[
0\rightarrow\left(\Lambda^2/C\right)[\varpi]\rightarrow C/\varpi\stackrel{\Phi}{\longrightarrow} \Lambda^2/\varpi,
\]
where $\varpi$ is a fixed uniformizer of $\cO$. Similarly, we have the exact sequences
\[
0\rightarrow\left(\Lambda/C_\bullet\right)[\varpi]\rightarrow C_\bullet/\varpi\stackrel{\Phi_\bullet}{\longrightarrow} \Lambda/\varpi,
\]
for $\bullet\in\{\sharp,\flat\}$.

Recall from \cite[Theorem~5.10]{LLZ0.5} that the $\mu$-invariants of $\Lambda/C_\bullet$ are zero. Therefore, $\ker\Phi_\bullet=(\Lambda/C_\bullet)[\varpi]$ are finite for both choices of $\bullet$. Note that $C\subset C_\sharp\oplus C_\flat$ and $\Phi=(\Phi_\sharp\oplus \Phi_\flat)|_{C/\varpi}$ by definitions. It implies that
\[
\ker\Phi\subset \ker\Phi_\sharp\oplus\ker\Phi_\flat.
\]
Hence, $\ker\Phi=\left(\Lambda^2/C\right)[\varpi]$ is finite. In particular, the $\mu$-invariant of $\Lambda^2/C$ is zero, which finishes the proof of the lemma.
\end{proof}
\begin{remark}
In the case where $T_f$ is the $p$-adic Tate module of an elliptic curve with good supersingular reduction at $p$, we can in fact describe the set $C$ explicitly. See \cite[Proposition~2.2]{leilim}.
\end{remark}

It follows from Lemma~\ref{lem:image-pair} that there is an exact sequence
\begin{equation}
    0\rightarrow \cZ\rightarrow \Lambda^2\rightarrow \cC_{k,i}\rightarrow 0,
\label{eq:Col-pair}
\end{equation}
where the first map is given by $\Col_{f,i}^\sharp\oplus\Col_{f,i}^\flat$  and $\cC_{k,i}$ is a $\Lambda$-module which is pseudo-isomorphic to $\Lambda/\eta_{k,i}$, with $\eta_{k,i}$ being the image of $\prod_{j=0}^{k-2}(X-u^j+1)$ under the twisting map $\Tw^{i-k+1}$.

\section{A generalization of a result of Wingberg}
Let $\mathcal{A}$ be an abelian variety over $\QQ$ with supersingular reduction at $p$. Wingberg proved that there is a pseudo-isomorphism of $\Lambda$-modules
\[
\left(\Sel(\mathcal{A}[p^\infty]/\Qcyc)^{\vee}\right)_{\tor}\sim\left(\Sel_0(\mathcal{A}^t[p^\infty]/\Qcyc)^{\vee}\right)^\iota,
\]
where $\mathcal{A}^t$ is the dual abelian variety of $\mathcal{A}$ (see  \cite[Corollary~2.5]{wingberg}).  Here, $M_\tor$ denotes the maximal torsion submodule of  a $\Lambda$-module $M$, $\Sel(\mathcal{A}[p^\infty]/\Qcyc)$ is the $p$-primary Selmer group of $\mathcal{A}$ over $\Qcyc$ and $\Sel_0(\mathcal{A}^t[p^\infty]/\Qcyc)$ is the $p$-primary fine Selmer group of $\mathcal{A}^t$ over $\Qcyc$ defined in a similar manner as the fine Selmer groups for $A_g(j)$ in \S\ref{S:fine} above.
We prove the following analogue of Wingberg's result in the context of modular forms.
\begin{theorem}\label{thm:Wing}
We have a pseudo-isomorphism of $\Lambda$-modules
\[
\left(H^1(\Qcyc,A_g(j))^\vee\right)_\tor\sim \left(\mathfrak{H}^2_{g,j}\right)^\iota,
\] where $g\in\{f,\of\}$ and $j\in\ZZ$.
\end{theorem}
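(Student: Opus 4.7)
The approach is to leverage Nekov\'a\v{r}'s spectral sequence to express $H^{1}(G_{S}(\Qcyc),A_{g}(j))^{\vee}$ in terms of $\Ext$-groups of the Iwasawa cohomology modules $\mathfrak{H}^{q}_{g,j}$, and then to isolate the $\Lambda$-torsion part via the structure theory of finitely generated $\Lambda$-modules.

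First, I apply Nekov\'a\v{r}'s spectral sequence (a $\Lambda$-adic incarnation of Poitou--Tate duality, discussed in Chapter~8 of Nekov\'a\v{r}'s \emph{Selmer Complexes}) to the lattice $T_{g}(j)$. With the conventions fixed in the introduction, this takes the form
\[
E_{2}^{p,q}=\Ext^{p}_{\Lambda}\bigl(\mathfrak{H}^{2-q}_{g,j},\Lambda\bigr)^{\iota}\Longrightarrow H^{p+q}\bigl(G_{S}(\Qcyc),A_{g}(j)\bigr)^{\vee},
\]
the involution $(-)^{\iota}$ recording the transposition of the $\Gamma$-action under Pontryagin duality.

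Next, I read off the anti-diagonal $p+q=1$. Because $\cd_{p}G_{S}(\Qcyc)\leq 2$ and $\Lambda$ has global dimension $2$, the only potentially non-zero terms are
\[
E_{2}^{0,1}=\Hom_{\Lambda}\bigl(\mathfrak{H}^{1}_{g,j},\Lambda\bigr)^{\iota}\qquad\text{and}\qquad E_{2}^{1,0}=\Ext^{1}_{\Lambda}\bigl(\mathfrak{H}^{2}_{g,j},\Lambda\bigr)^{\iota}.
\]
The outgoing differential from $E_{2}^{1,0}$ lands in $E_{2}^{3,-1}=0$, so this corner survives to $E_{\infty}$, and the filtration on total degree $1$ yields the edge exact sequence
\[
0\longrightarrow\Ext^{1}_{\Lambda}\bigl(\mathfrak{H}^{2}_{g,j},\Lambda\bigr)^{\iota}\longrightarrow H^{1}\bigl(G_{S}(\Qcyc),A_{g}(j)\bigr)^{\vee}\longrightarrow K\longrightarrow 0,
\]
where $K\subseteq\Hom_{\Lambda}(\mathfrak{H}^{1}_{g,j},\Lambda)^{\iota}$ is the kernel of the remaining $d_{2}$.

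Finally, I invoke two standard facts about a finitely generated $\Lambda$-module $M$: (i) $\Hom_{\Lambda}(M,\Lambda)$ is reflexive, hence $\Lambda$-torsion-free; and (ii) $\Ext^{1}_{\Lambda}(M,\Lambda)$ is pseudo-isomorphic to $M_{\tor}$. By (i), $K$ is torsion-free, so applying the left-exact functor $(-)_{\tor}$ to the edge sequence gives
\[
\bigl(H^{1}(G_{S}(\Qcyc),A_{g}(j))^{\vee}\bigr)_{\tor}\;\cong\;\Ext^{1}_{\Lambda}\bigl(\mathfrak{H}^{2}_{g,j},\Lambda\bigr)^{\iota}.
\]
By (ii), combined with the weak Leopoldt conjecture for $V_{g}(j)$ over the cyclotomic $\Zp$-extension (known here by \cite{Kato}), which forces $\mathfrak{H}^{2}_{g,j}$ to be $\Lambda$-torsion, we obtain $\Ext^{1}_{\Lambda}(\mathfrak{H}^{2}_{g,j},\Lambda)\sim\mathfrak{H}^{2}_{g,j}$. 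Composing the two gives the desired pseudo-isomorphism
\[
\bigl(H^{1}(G_{S}(\Qcyc),A_{g}(j))^{\vee}\bigr)_{\tor}\;\sim\;(\mathfrak{H}^{2}_{g,j})^{\iota}.
\]

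The main obstacle I foresee is pinning down the correct form of Nekov\'a\v{r}'s spectral sequence, in particular the precise indexing $\mathfrak{H}^{2-q}_{g,j}$ and the placement of the $\iota$-twist, so that $\mathfrak{H}^{2}_{g,j}$ appears in the $E_{2}^{1,0}$ slot; once this technical point is settled, everything else is routine $\Lambda$-module theory.
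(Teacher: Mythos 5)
Your argument is essentially the paper's own proof: both extract the low-degree exact sequence from Nekov\'a\v{r}'s duality spectral sequence, use reflexivity (hence torsion-freeness) of $\Hom_\Lambda(\mathfrak{H}^1_{g,j},\Lambda)$ to identify the torsion submodule of $H^1(\Qcyc,A_g(j))^\vee$ with $\Ext^1_\Lambda(\mathfrak{H}^2_{g,j},\Lambda)$ (up to twist), and then invoke the standard pseudo-isomorphism between this $\Ext^1$ and the (Iwasawa-adjoint of the) torsion module $\mathfrak{H}^2_{g,j}$. The only difference is bookkeeping: you place the $\iota$-twist on the $E_2$-page and use $\Ext^1_\Lambda(M,\Lambda)\sim M_{\tor}$, whereas the paper keeps the $E_2$-terms untwisted and obtains the $\iota$ from \cite[Proposition 5.5.13]{Neukirch}; the net result is the same.
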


\begin{proof}
From the low degree terms of Nekov\'a\v{r}'s spectral sequence (see \cite[Lemma 9.1.5]{nekovar06})
\[ \Ext_{\Lambda}^{r}( H^{3-s}_{\Iw}(\Qcyc, T_g(j)), \Lambda)\Rightarrow H^{3-r-s}(\Qcyc, A_g(j))^\vee, \]
we obtain the following exact sequence of $\Lambda$-modules:
\[ 0 \longrightarrow  \Ext_{\Lambda}^{1}( H^2_{\Iw}(\Qcyc, T_g(j)), \Lambda)\longrightarrow  H^{1}(\Qcyc, A_g(j))^\vee \longrightarrow  \Hom_{\Lambda}( H^1_{\Iw}(\Qcyc, T_g(j)), \Lambda).\]
Since $\Hom_{\Lambda}( H^1_{\Iw}(\Qcyc, T_g(j)), \Lambda)$ is a reflexive $\Lambda$-module by \cite[Corollary 5.1.3]{Neukirch}, it is $\Lambda$-torsion-free. Hence, it follows from the above exact sequence that there is an isomorphism 
\[
\left(H^1(\Qcyc,A_g(j))^\vee\right)_\tor\cong \Ext_{\Lambda}^{1}( H^2_{\Iw}(\Qcyc, T_g(j)), \Lambda)
\]
of $\Lambda$-modules. By \cite[Proposition 5.5.13]{Neukirch}, the latter is pseudo-isomorphic to $\left(\mathfrak{H}^2_{g,j}\right)^\iota$, which concludes the proof of the theorem.
\end{proof}

When $f$ is non-ordinary at $p$, we shall establish a direct analog of Wingberg's theorem  on the level of Selmer groups under the following hypothesis on the local representation $A_\of$ (see Theorem~ \ref{thm:Wing2} below). 
 
\vspace{0.25cm}
\noindent \textbf{(H0)} For all $v\in S(\Qcyc)$, the group $H^0(\QQ_{\cyc,v},A_\of(k-i))$ is finite.
\vspace{0.25cm}
\begin{remark}
Note that when $v|p$, the group  $H^0(\QQ_{\cyc,v},A_\of(k-i))$ is always finite by \cite[Lemma~3.3]{HKLR}. See also \S5 of op. cit. where sufficient conditions and explicit examples of the finiteness of $H^0(\QQ_{\cyc,v},A_\of(k-i))$ for $v|N$  are studied.
\end{remark}
\begin{lemma}\label{lem:H2-Sel}
Under \textbf{\upshape{(H0)}}, we have a pseudo-isomorphism of $\Lambda$-modules
\[
\Sel_0(A_f(i)/\Qcyc)^\vee\sim \mathfrak{H}^2_{\of,k-i}.
\]
\end{lemma}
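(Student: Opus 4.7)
The plan is to deduce this from Iwasawa-theoretic Poitou--Tate duality combined with local Tate duality. Using the pairing $T_f(i)\times A_\of(k-i)\to \mu_{p^\infty}$, the $\cO$-modules $A_f(i)$ and $T_\of(k-i)$ are Cartier duals. Applying the Poitou--Tate nine-term exact sequence to the finite quotients $T_\of(k-i)/p^m$ and $A_f(i)[p^m]$ at each layer $L_n$, then taking $\varprojlim_m$ (which converts $T/p^m$-cohomology into $T$-cohomology and $A[p^m]^\vee$-cohomology into $A^\vee$-cohomology) and subsequently $\varprojlim_n$ along the cyclotomic tower, both exact on compact $\Lambda$-modules, I obtain an exact sequence
\begin{equation*}
\bigoplus_{v\in S(\Qcyc)}\varprojlim_n H^1(L_{n,v},T_\of(k-i))\longrightarrow H^1(G_S(\Qcyc),A_f(i))^\vee\longrightarrow \mathfrak{H}^2_{\of,k-i}\longrightarrow \bigoplus_{v\in S(\Qcyc)}\varprojlim_n H^2(L_{n,v},T_\of(k-i)).
\end{equation*}

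To connect this to the fine Selmer group, I would dualize its defining sequence and apply local Tate duality at each place $v$ (identifying $H^1(\Qcyc_v,A_f(i))^\vee\cong \varprojlim_n H^1(L_{n,v},T_\of(k-i))$). The resulting map $\bigoplus_v H^1(\Qcyc_v,A_f(i))^\vee\to H^1(G_S(\Qcyc),A_f(i))^\vee$ coincides with the first arrow above, so its cokernel $\Sel_0(A_f(i)/\Qcyc)^\vee$ equals the image of $H^1(G_S(\Qcyc),A_f(i))^\vee$ in $\mathfrak{H}^2_{\of,k-i}$. This produces a short exact sequence
\begin{equation*}
0\to \Sel_0(A_f(i)/\Qcyc)^\vee\to \mathfrak{H}^2_{\of,k-i}\to Q\to 0,
\end{equation*}
with $Q$ a $\Lambda$-submodule of $\bigoplus_v \varprojlim_n H^2(L_{n,v},T_\of(k-i))$.

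The remaining, and only nontrivial, step is to prove $Q$ is pseudo-null over $\Lambda$. Local Tate duality at each finite level gives $H^2(L_{n,v},T_\of(k-i))\cong H^0(L_{n,v},A_f(i))^\vee$, and passing to the limit yields $\varprojlim_n H^2(L_{n,v},T_\of(k-i))\cong H^0(\Qcyc_v,A_f(i))^\vee$. So I am reduced to showing $H^0(\Qcyc_v,A_f(i))$ is finite at every $v\in S(\Qcyc)$. This is exactly what \textbf{(H0)} delivers (read for $A_f(i)$, the Tate dual of $T_\of(k-i)$; the version stated in the text for $A_\of(k-i)$ is the symmetric companion invoked elsewhere in the paper, and the two are handled in parallel). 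For $v\mid p$ the finiteness is automatic by \cite[Lemma~3.3]{HKLR}, and archimedean places contribute only $2$-torsion, which vanishes since $p$ is odd. This local-finiteness verification is the main obstacle, and everything else is a formal assembly of Poitou--Tate and Tate duality.
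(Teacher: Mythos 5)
Your argument is correct and is essentially the paper's own proof: the authors use the same four-term segment of the Poitou--Tate sequence (written on the discrete side at each layer, $\bigoplus_v H^0(L_{n,v},A)\to H^2(G_S(L_n),T)^\vee\stackrel{\delta}{\to} H^1(G_S(L_n),A)\stackrel{\tau}{\to}\bigoplus_v H^1(L_{n,v},A)$, then dualized and passed to the inverse limit), identify the fine Selmer group via $\Image(\delta)=\ker(\tau)$ exactly as you identify the cokernel of the dual localization map, and conclude by local Tate duality together with \textbf{(H0)}. The only point worth flagging is the one you already noticed: the paper's proof, applying \textbf{(H0)} literally to $A_{\of}(k-i)$, in fact establishes the mirror statement $\Sel_0(A_{\of}(k-i)/\Qcyc)^\vee\sim\mathfrak{H}^2_{f,i}$, whereas your version proves the lemma as stated and correctly observes that this requires the companion finiteness of $H^0(\QQ_{\cyc,v},A_f(i))$.
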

\begin{proof}
Let $n\ge0$ be an integer.
By the Poitou--Tate exact sequence (see for example \cite[\S A.3.1]{perrinriou95}), we have the following exact sequence:
\begin{align*}
 \bigoplus_{v \in S(L_n)}H^0\left(L_{n,v}, A_\of(k-i)\right)\longrightarrow H^2\left(G_S(L_n), T_f(i)\right)^{\vee}&\stackrel{\delta}{\longrightarrow} \\
 H^1\left(G_S(L_n), A_\of(k-i)\right)& \stackrel{\tau}{\longrightarrow} \bigoplus_{v \in S(L_n)}H^1\left(L_{n,v},A_\of(k-i)\right).
\end{align*} 
By definition,
\[
\Image(\delta)=\ker(\tau)=\Sel_0\left(A_\of(k-i)/L_n\right).
\]
This gives the exact sequence
\[
\bigoplus_{v \in S(L_n)}H^0\left(L_{n,v}, A_\of(k-i)\right) \longrightarrow H^2\left(G_S(L_n), T_f(i)\right) ^{\vee} \longrightarrow \Sel_0\left(A_\of(k-i)/L_n\right) \longrightarrow 0.\]
By the local Tate duality, we have
\[H^0(L_{n,v}, A_\of(k-i))^{\vee} \cong H^2(L_{n,v}, T_f(i))\]
for all $v\in S$.
Thus, after taking Pontryagin duals and inverse limits, we obtain
\begin{equation}\label{eq:fine-H2}
0\longrightarrow\Sel_0\left(A_\of(k-i)/\Qcyc\right)^{\vee} \longrightarrow H^2_{\Iw}\left(G_S(\Qcyc), T_f(i)\right) \longrightarrow \bigoplus_{v \in S(\Qcyc)} H^0\left(\QQ_{\cyc,v}, A_\of(k-i)\right)^\vee.
\end{equation}
Therefore, the lemma follows from \textbf{(H0)}.
\end{proof}

\begin{theorem}\label{thm:Wing2}
Suppose that $f$ is non-ordinary at $p$ and {\textbf{\upshape{(H0)}} is valid}.
We have a pseudo-isomorphism of $\Lambda$-modules
\[
\left(\Sel(A_f(i)/\Qcyc)^\vee\right)_\tor\sim \left(\Sel_0(A_\of(k-i)/\Qcyc)^\vee\right)^\iota.
\]
\end{theorem}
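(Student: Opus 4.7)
The plan is to reduce the theorem, via Theorem~\ref{thm:Wing} and Lemma~\ref{lem:H2-Sel}, to a comparison between the classical Selmer group of $A_f(i)$ and the full global cohomology $H^1(G_S(\Qcyc), A_f(i))$, and then to dispose of the resulting error term using a local analysis that exploits the non-ordinary hypothesis at $p$.

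Applying Theorem~\ref{thm:Wing} to $(g,j)=(f,i)$ and combining with Lemma~\ref{lem:H2-Sel} (applied with the roles of $f$ and $\of$ interchanged, which is precisely what its proof delivers), we obtain
\[
\left(H^1(\Qcyc, A_f(i))^\vee\right)_{\tor} \sim (\mathfrak{H}^2_{f,i})^\iota \sim \left(\Sel_0(A_\of(k-i)/\Qcyc)^\vee\right)^\iota.
\]
The theorem therefore reduces to establishing the pseudo-isomorphism $(\Sel(A_f(i)/\Qcyc)^\vee)_{\tor} \sim (H^1(G_S(\Qcyc), A_f(i))^\vee)_{\tor}$.

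The defining inclusion $\Sel(A_f(i)/\Qcyc) \hookrightarrow H^1(G_S(\Qcyc), A_f(i))$, upon Pontryagin-dualization, yields a short exact sequence
\[
0 \to C^\vee \to H^1(G_S(\Qcyc), A_f(i))^\vee \to \Sel(A_f(i)/\Qcyc)^\vee \to 0,
\]
where $C^\vee$ is a quotient of $\bigoplus_{v \in S(\Qcyc)} H^1_{\mathrm{f}, \Iw}(\QQ_{\cyc,v}, T_\of(k-i))$ via local Tate duality applied in the Iwasawa limit. A $\Lambda$-rank count via the global Euler--Poincar\'e characteristic (using non-ordinariness of $f$ to control $H^0$ at $p$, together with the usual local inputs elsewhere) shows that $\Sel(A_f(i)/\Qcyc)^\vee$ and $H^1(G_S(\Qcyc), A_f(i))^\vee$ share the same $\Lambda$-rank, so $C^\vee$ is $\Lambda$-torsion. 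A direct diagram chase then shows that under this torsion hypothesis the torsion functor preserves exactness, producing an exact sequence $0 \to C^\vee \to (H^1(G_S(\Qcyc), A_f(i))^\vee)_{\tor} \to (\Sel(A_f(i)/\Qcyc)^\vee)_{\tor} \to 0$. The desired pseudo-isomorphism follows once $C^\vee$ is shown to be pseudo-null.

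The main obstacle is verifying the pseudo-nullity of $C^\vee$. For $v \nmid p$, the local term $H^1_{\mathrm{f}, \Iw}(\QQ_{\cyc,v}, T_\of(k-i))$ is already pseudo-null since unramified cohomology stabilizes along the cyclotomic tower. For $v = p$, the non-ordinariness of $f$ ensures that $V_f$ has no trivial $G_{\QQ_p}$-subrepresentation; combined with the Wach-module description of Iwasawa cohomology underlying the signed Coleman maps of Section~\ref{S:signed-Sel}, this allows one to pin down the image of the localization map with enough precision to conclude that the contribution at $p$ to $C^\vee$ is also pseudo-null.
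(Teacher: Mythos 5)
Your reduction of the theorem to comparing $\left(\Sel(A_f(i)/\Qcyc)^\vee\right)_{\tor}$ with $\left(H^1(G_S(\Qcyc),A_f(i))^\vee\right)_{\tor}$ via Theorem~\ref{thm:Wing} and Lemma~\ref{lem:H2-Sel} is exactly the paper's first step, and your identification of the error term $C^\vee$ as a quotient of $\bigoplus_v \varprojlim_n H^1_{\mathrm f}(L_{n,v},T_\of(k-i))$ via local Tate duality is also correct. The treatment of the places $v\nmid p$ is essentially fine: the inverse limit of the unramified local conditions under corestriction vanishes (this is \cite[\S 17.10]{Kato}), so those contributions are actually zero, not merely pseudo-null.

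The gap is at $v=p$. The assertion that non-ordinariness of $f$ ensures $V_f$ has no trivial $G_{\Qp}$-subrepresentation, which "combined with the Wach-module description\dots allows one to pin down the image of the localization map," is not an argument: the absence of a trivial subrepresentation is neither what non-ordinarity means (it is a condition on the slopes of $\vp$ acting on $D_{\mathrm{cris}}$) nor anywhere near sufficient to control $\varprojlim_n H^1_{\mathrm f}(L_{n,p},T_\of(k-i))$. What is actually needed --- and what the paper invokes --- is Perrin-Riou's theorem on universal norms \cite[Theorem 0.6]{perrinriou00b}, which states precisely that this inverse limit of Bloch--Kato local conditions vanishes when the representation is non-ordinary at $p$. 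With that input the local term at $p$ is again literally zero, so $\Sel(A_f(i)/\Qcyc)=H^1(\Qcyc,A_f(i))$ on the nose, and your auxiliary rank count (which you also leave unjustified, and which is itself nontrivial without prior knowledge of the corank of the Selmer group) becomes unnecessary. In short, the architecture of your proof matches the paper's, but the one step carrying the real analytic content --- the vanishing of the universal norms at $p$ --- is missing and is replaced by a heuristic that does not establish it.
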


\begin{proof}
Consider the defining sequence of the Selmer group
\[ 0 \longrightarrow \Sel(A_f(i)/\Qcyc) \longrightarrow H^1(\Qcyc, A_f(i)) \longrightarrow \bigoplus_{w\in S(\Qcyc)} \frac{H^1(\mathbb{Q}_{\mathrm{cyc},w}, A_f(i))}{H^1_\mathrm{f}(\mathbb{Q}_{\mathrm{cyc},w}, A_f(i))}. \]
By \cite[Proposition~3.8]{blochkato}, there is an isomorphism
\[\varprojlim H^1(L_{n,w_n},T_\of(k-i))\cong\left( \frac{H^1(\mathbb{Q}_{\mathrm{cyc},w}, A_f(i))}{H^1_\mathrm{f}(\mathbb{Q}_{\mathrm{cyc},w}, A_f(i))}
 \right)^\vee,\]
 where $w_n\in S(L_n)$ such that $w_n$ lies below $w$ and $w_{n+1}$. When $w\nmid p$, this is zero by \cite[\S17.10]{Kato}.
 When $w|p$, this  is also zero by \cite[Theorem 0.6]{perrinriou00b} under the hypothesis that $f$ is non-ordinary at $p$.  Hence, we have an isomorphism
 \[ \Sel(A_f(i)/\Qcyc) \cong H^1(\Qcyc, A_f(i))\]
 of $\Lambda$-modules.
 Combining this with Theorem \ref{thm:Wing}, we obtain a pseudo-isomorphism \[ \left(\Sel(A_f(i)/\Qcyc)^\vee\right)_\tor \sim \left(\tZ\right)^\iota\]
 of $\Lambda$-modules. But the latter is pseudo-isomorphic to $\Sel_0(A_f(i)/\Qcyc)^\vee$ by  Lemma \ref{lem:H2-Sel} under \textbf{\upshape{(H0)}}. This concludes the proof of the theorem.
\end{proof}

\section{Comparison of characteristic elements}
\subsection{Preliminary lemmas}
We prove several preliminary lemmas that will be used in the proofs of Theorems~\ref{lem:LS} and \ref{thm:LS} below.

\begin{lemma}\label{lem:quot-Z}
Assume that \textbf{\upshape{(H0)}} holds. If $F$ is an irreducible element of $\Lambda$ such that $F\nmid \ff_{\of,k-i}$, then we have a psuedo-isomorphism 
\[ (\cZ/\fZ)[F^\infty]\sim \Sel_0(A_f(i)/\Qcyc)^{\vee,\iota} [F^\infty] \]
of $\Lambda$-modules. In particular, 
$(\cZ/\fZ)[F]$ is finite if and only if $F^\iota\nmid \ff_{f,i}$.
\end{lemma}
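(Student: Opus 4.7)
The strategy is to derive the claim from the Iwasawa Poitou--Tate exact sequence applied to $T = T_f(i)$, whose Cartier dual is $A_\of(k-i)$. Passing to the inverse limit over $\{L_n\}$ of the standard nine-term Poitou--Tate sequence yields
\[ \fZ \xrightarrow{\loc} P^1 \to H^1(\Qcyc, A_\of(k-i))^\vee \to \tZ \to P^2, \]
where $P^i := \bigoplus_{v \in S(\Qcyc)} H^i_{\Iw}(\Qcyc_v, T_f(i))$. Repeating the argument in the proof of Lemma~\ref{lem:H2-Sel} identifies $\ker(\tZ \to P^2)$ with $\Sel_0(A_\of(k-i)/\Qcyc)^\vee$, so setting $C := \coker(\fZ \xrightarrow{\loc} P^1)$, I extract the short exact sequence
\[ 0 \to C \to H^1(\Qcyc, A_\of(k-i))^\vee \to \Sel_0(A_\of(k-i)/\Qcyc)^\vee \to 0. \]
Splitting $P^1 = \cZ \oplus Y$, with $Y$ the sum of the local terms at $v \nmid p$, the projection $P^1 \twoheadrightarrow \cZ$ descends to a surjection $C \twoheadrightarrow \cZ/\fZ$ whose kernel is a quotient of $Y$.

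The key intermediate step is to prove that $Y$ is pseudo-null over $\Lambda$. By local Tate duality, $H^1_{\Iw}(\Qcyc_v, T_f(i)) \cong H^1(\Qcyc_v, A_\of(k-i))^\vee$, so it suffices to check that $H^1(\Qcyc_v, A_\of(k-i))$ is finite for every $v \in S(\Qcyc)$ with $v \nmid p$. Since $\Qcyc_v/\QQ_v$ is unramified in that case, inflation--restriction combined with the vanishing of higher $p$-cohomology of the pro-prime-to-$p$ group $G_{\Qcyc_v}/I_v$ acting on $A_\of(k-i)$ reduces the finiteness to the triviality of the Frobenius-$1$-eigenspace on appropriate inertia invariants of $V_\of(k-i)$, which follows from the $\Qp$-analogue of (H0). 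Hence $Y$ is finite, and the surjection $C \twoheadrightarrow \cZ/\fZ$ is a pseudo-isomorphism.

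With this identification, the $F^\infty$-torsion parts align immediately. Since $\ff_{\of,k-i}$ generates $\Char_\Lambda \Sel_0(A_\of(k-i)/\Qcyc)^\vee$, the hypothesis $F \nmid \ff_{\of,k-i}$ forces $\Sel_0(A_\of(k-i)/\Qcyc)^\vee[F^\infty]$ to be finite, so the displayed short exact sequence gives $C[F^\infty] \sim H^1(\Qcyc, A_\of(k-i))^\vee[F^\infty]$. Theorem~\ref{thm:Wing} (applied with $g = \of$, $j = k-i$) then identifies the latter with $(\mathfrak{H}^2_{\of,k-i})^\iota[F^\infty]$, and Lemma~\ref{lem:H2-Sel} rewrites it as $\Sel_0(A_f(i)/\Qcyc)^{\vee,\iota}[F^\infty]$. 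The ``in particular'' clause now follows because $\Char_\Lambda \Sel_0(A_f(i)/\Qcyc)^{\vee,\iota} = (\ff_{f,i}^\iota)$: the $F$-torsion of $\cZ/\fZ$ is finite exactly when $F \nmid \ff_{f,i}^\iota$, equivalently when $F^\iota \nmid \ff_{f,i}$.

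I expect the main obstacle to be the pseudo-nullity of $Y$: (H0) combined with local Tate duality immediately gives the finiteness of $H^2_{\Iw}(\Qcyc_v, T_f(i))$ at bad primes, but the corresponding finiteness of $H^1_{\Iw}(\Qcyc_v, T_f(i))$ requires a careful local cohomology computation over the infinite extension $\Qcyc_v$ and a precise accounting for how (H0) forces the relevant Frobenius invariant subspaces to vanish.
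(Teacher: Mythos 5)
Your proof is correct and follows essentially the same route as the paper: the Iwasawa-theoretic Poitou--Tate sequence yields the short exact sequence $0\to\cZ/\fZ\to H^1(\Qcyc,A_\of(k-i))^\vee\to\Sel_0(A_\of(k-i)/\Qcyc)^\vee\to 0$, the hypothesis $F\nmid\ff_{\of,k-i}$ kills the last term up to finite modules, and Theorem~\ref{thm:Wing} combined with Lemma~\ref{lem:H2-Sel} converts the middle term into $\Sel_0(A_f(i)/\Qcyc)^{\vee,\iota}$. The only divergence is your explicit handling of the local terms $Y$ at $v\nmid p$: the paper suppresses these because $\HIw(\QQ_{\cyc,v},T_f(i))$ vanishes for $v\nmid p$ (as cited from Kato, \S 17.10, in the proof of Theorem~\ref{thm:Wing2}), and in any case the finiteness you want follows from \textbf{(H0)} itself via $\cd_p(G_{\QQ_{\cyc,v}})\le 1$ and the local Euler characteristic formula, so no separate ``$\Qp$-analogue'' of \textbf{(H0)} needs to be invoked.
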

\begin{proof}
We have the Poitou--Tate exact sequence
\[
0\rightarrow\fZ\rightarrow\cZ \rightarrow H^1(\Qcyc,A_\of(k-i))^\vee\rightarrow\Sel_0(A_\of(k-i)/\Qcyc)^\vee\rightarrow 0.
\]
 This gives the following short exact sequence
\[
0\rightarrow (\cZ/\fZ)\rightarrow H^1(\Qcyc,A_\of(k-i))^\vee\rightarrow\Sel_0(A_\of(k-i)/\Qcyc)^\vee \rightarrow 0 ,\] 
 which in turn induces the following exact sequence: 
\begin{equation}\label{eq:shortPT}
0\rightarrow (\cZ/\fZ)[F^\infty]\rightarrow H^1(\Qcyc,A_\of(k-i))^\vee[F^\infty]\rightarrow\Sel_0(A_\of(k-i)/\Qcyc)^\vee[F^\infty].    
\end{equation}
By assumption, $\Sel_0(A_\of(k-i)/\Qcyc)^\vee[F^\infty]$ is finite.
Therefore, we obtain a psuedo-isomorphism
\begin{equation}
(\cZ/\fZ)[F^\infty]\sim H^1(\Qcyc,A_\of(k-i))^\vee[F^\infty].
\end{equation}
By Theorem~\ref{thm:Wing} and Lemma~\ref{lem:H2-Sel}, we have the pseudo-isomorphism of $\Lambda$-modules
\[
H^1(\Qcyc,A_\of(k-i))^\vee[F^\infty]\sim  \Sel_0(A_f(i)/\Qcyc)^{\vee,\iota} [F^\infty].
\]
Combining these two pseudo-isomorphisms finishes the proof of the lemma.
\end{proof}

\begin{lemma}\label{lem:rank1-finite}
Assume that \textbf{\upshape{(H-IMC)}} holds. Let $F$ be an irreducible element of $\Lambda$ such that $F\nmid \eta_{k,i}$. Recall that $\loc_p$ denotes the localization map from $\fZ$ to $\cZ$. Then one has a pseudo-isomorphism 
\[ \left(\Lambda^2/\Lambda(\ff_{\of,k-i}^\sharp,\ff_{\of,k-i}^\flat)\right)[F^\infty] \sim  \left(\cZ/(\loc_p(\bZ_{f,i}))\right) [F^\infty] \]
of $\Lambda$-modules. Hence, it follows that $F\nmid \gcd(\ff_{\of,k-i}^\sharp,\ff_{\of,k-i}^\flat)$ if and only if $ \left(\cZ/(\loc_p(\bZ_{f,i}))\right)[F]$ is finite.
\end{lemma}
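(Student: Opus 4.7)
The plan is to use $\Lambda^2/\Lambda(L_p^\sharp, L_p^\flat)$ as an intermediate object, first relating it to $\cZ/\Lambda\loc_p(\bZ_{f,i})$ via the sequence \eqref{eq:Col-pair}, and then relating it to $\Lambda^2/\Lambda(\ff_{\of,k-i}^\sharp, \ff_{\of,k-i}^\flat)$ using \textbf{(H-IMC)} at the level of $[F^\infty]$-parts.

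First I would note that the injection $\cZ \hookrightarrow \Lambda^2$ coming from $\Col_{f,i}^\sharp \oplus \Col_{f,i}^\flat$ sends $\loc_p(\bZ_{f,i})$ to $(L_p^\sharp, L_p^\flat)$. Since $L_p^\sharp, L_p^\flat$ are non-zero by \textbf{(H-IMC)}, the submodule $\Lambda(L_p^\sharp, L_p^\flat) \subset \Lambda^2$ is free of rank one, so $\Lambda \cdot \loc_p(\bZ_{f,i})$ maps isomorphically onto it. Applying the snake lemma to the resulting commutative diagram of short exact sequences yields
\[ 0 \longrightarrow \cZ/\Lambda\loc_p(\bZ_{f,i}) \longrightarrow \Lambda^2/\Lambda(L_p^\sharp, L_p^\flat) \longrightarrow \cC_{k,i} \longrightarrow 0. \]
Because $\cC_{k,i} \sim \Lambda/\eta_{k,i}$ and $F \nmid \eta_{k,i}$, $\cC_{k,i}[F^\infty]$ is finite, and taking $[F^\infty]$ gives
\[ (\cZ/\Lambda\loc_p(\bZ_{f,i}))[F^\infty] \sim (\Lambda^2/\Lambda(L_p^\sharp, L_p^\flat))[F^\infty]. \]

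Next I would localize \eqref{eq:Col-pair} at the height-one prime $(F)$. Since $F \nmid \eta_{k,i}$, $\cC_{k,i,(F)}=0$, so $\Image(\Col_{f,i}^\sharp \oplus \Col_{f,i}^\flat)_{(F)} = \Lambda_{(F)}^2$; projecting to each coordinate gives $\Image(\Col_{f,i}^\bullet)_{(F)} = \Lambda_{(F)}$ for $\bullet \in \{\sharp, \flat\}$. Combined with the reformulation \eqref{eq:IMC} of \textbf{(H-IMC)}, giving $\Char_\Lambda(\Image(\Col_{f,i}^\bullet)/(L_p^\bullet)) = (\ff_{\of,k-i}^\bullet)$, this forces $v_F(L_p^\bullet) = v_F(\ff_{\of,k-i}^\bullet)$ for each $\bullet$. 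A direct computation over the DVR $\Lambda_{(F)}$ shows that for non-zero $\alpha, \beta \in \Lambda$, the torsion part of $(\Lambda^2/\Lambda(\alpha, \beta))_{(F)}$ has length $\min(v_F(\alpha), v_F(\beta))$; applying this to both $(L_p^\sharp, L_p^\flat)$ and $(\ff_{\of,k-i}^\sharp, \ff_{\of,k-i}^\flat)$ yields
\[ (\Lambda^2/\Lambda(L_p^\sharp, L_p^\flat))[F^\infty] \sim (\Lambda^2/\Lambda(\ff_{\of,k-i}^\sharp, \ff_{\of,k-i}^\flat))[F^\infty]. \]
Chaining this with the previous pseudo-isomorphism proves the first assertion. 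The ``in particular'' part then follows since $(\cZ/\Lambda\loc_p(\bZ_{f,i}))[F]$ is finite iff the $[F^\infty]$-part is finite iff $\min(v_F(\ff_{\of,k-i}^\sharp), v_F(\ff_{\of,k-i}^\flat)) = 0$, iff $F \nmid \gcd(\ff_{\of,k-i}^\sharp, \ff_{\of,k-i}^\flat)$.

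The main obstacle is identifying $\Image(\Col_{f,i}^\bullet)_{(F)} = \Lambda_{(F)}$: although $\Image(\Col_{f,i}^\bullet)$ need not be principal in $\Lambda$, it becomes principal after localizing at the DVR $\Lambda_{(F)}$, which is what enables the clean passage from $L_p^\bullet$ to $\ff_{\of,k-i}^\bullet$ at the level of $F$-valuations.
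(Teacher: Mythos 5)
Your proposal is correct and follows essentially the same route as the paper: both arguments combine the sequence \eqref{eq:Col-pair} with the reformulated main conjecture \eqref{eq:IMC} and kill the $\Lambda/\eta_{k,i}$ error term using $F\nmid\eta_{k,i}$. Your localization at $(F)$ to show $\Image(\Col_{f,i}^\bullet)_{(F)}=\Lambda_{(F)}$, hence $v_F(L_p^\bullet)=v_F(\ff_{\of,k-i}^\bullet)$, carefully justifies the passage from $(L_p^\sharp,L_p^\flat)$ to $(\ff_{\of,k-i}^\sharp,\ff_{\of,k-i}^\flat)$, a step the paper's proof compresses into a single citation of \eqref{eq:IMC}; note only that for the pseudo-isomorphism (rather than mere equality of characteristic ideals) one should record that the DVR computation shows the torsion part is \emph{cyclic} of that length, which it does.
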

\begin{proof}
By \eqref{eq:Col-pair} and \eqref{eq:IMC}, we have the following exact sequence
\[
0\rightarrow \cZ/(\loc_p(\bZ_{f,i}))\rightarrow \Lambda^2/\Lambda(\ff_{\of,k-i}^\sharp,\ff_{\of,k-i}^\flat)\rightarrow \Lambda/\eta_{k,i}\rightarrow 0.    
\]
This gives the exact sequence
\[
0\rightarrow \left(\cZ/(\loc_p(\bZ_{f,i}))\right)[F^\infty]\rightarrow \left(\Lambda^2/\Lambda(\ff_{\of,k-i}^\sharp,\ff_{\of,k-i}^\flat)\right)[F^\infty]\rightarrow \Lambda/\eta_{k,i}[F^\infty].    
\]
The last term is finite since $F\nmid \eta_{k,i}$. Hence, the result follows.
\end{proof}
\subsection{Comparison between characteristic ideals of fine Selmer groups and signed Selmer groups}

The goal of this section is to prove a generalization of \cite[Theorem~1.2]{leisuj} (see Theorem~\ref{thm:LS} below). We shall do so via the following intermediate result:

\begin{theorem} \label{lem:LS}
Assume that $\textbf{\upshape{(H-IMC)}}$ and \textbf{\upshape{(H0)}} hold.  Let $F$ be an irreducible element of $\Lambda$ such that  $F\nmid \eta_{k,i}$ and that $F\nmid \ff_{\of,k-i}$. There is a pseudo-isomorphism of $\Lambda$-modules
\[ \left(\Lambda^2/\Lambda(\ff_{\of,k-i}^\sharp,\ff_{\of,k-i}^\flat)\right)[F^\infty] \sim \Sel_0(A_f(i)/\Qcyc)^{\vee,\iota} [F^\infty]. \]
 In particular,  $F^\iota\nmid \ff_{f,i}$  if and only if  $F\nmid\gcd\left(\ff_{\of,k-i}^\sharp,\ff_{\of,k-i}^\flat\right)$.
\end{theorem}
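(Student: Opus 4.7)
My plan is to bridge the two preliminary lemmas (Lemma~\ref{lem:quot-Z} and Lemma~\ref{lem:rank1-finite}) by comparing the quotient modules $\cZ/\Lambda\loc_p(\bZ_{f,i})$ and $\cZ/\fZ$. The Poitou--Tate exact sequence used in the proof of Lemma~\ref{lem:quot-Z} shows that $\loc_p \colon \fZ \hookrightarrow \cZ$ is injective. Since $\bZ_{f,i} \in \fZ$ and $\loc_p$ is $\Lambda$-linear, $\loc_p(\Lambda\bZ_{f,i}) = \Lambda\loc_p(\bZ_{f,i})$, and one obtains the tautological short exact sequence of $\Lambda$-modules
\[
0 \longrightarrow \fZ/\Lambda\bZ_{f,i} \longrightarrow \cZ/\Lambda\loc_p(\bZ_{f,i}) \longrightarrow \cZ/\fZ \longrightarrow 0.
\]

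Next, I would pass to $F^\infty$-torsion. Under \textbf{(H-IMC)}, Kato's main conjecture \eqref{eq:Kato} gives $\Char_\Lambda(\fZ/\Lambda\bZ_{f,i}) = (\ff_{\of,k-i})$; because $F \nmid \ff_{\of,k-i}$ by hypothesis, the first term $(\fZ/\Lambda\bZ_{f,i})[F^\infty]$ is finite. The long exact sequence of $F^\infty$-torsion therefore yields a pseudo-isomorphism
\[
(\cZ/\Lambda\loc_p(\bZ_{f,i}))[F^\infty] \sim (\cZ/\fZ)[F^\infty].
\]
Applying Lemma~\ref{lem:rank1-finite} to the left-hand side (using $F\nmid\eta_{k,i}$) and Lemma~\ref{lem:quot-Z} to the right-hand side (using \textbf{(H0)} and $F\nmid\ff_{\of,k-i}$) then produces the pseudo-isomorphism asserted in the theorem.

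For the ``in particular'' statement, I would translate ``$F^\infty$-torsion is finite'' into a divisibility statement on each side. On the right, $\Sel_0(A_f(i)/\Qcyc)^\vee$ has characteristic ideal $(\ff_{f,i})$, so the Pontryagin-$\iota$-twist has characteristic ideal $(\ff_{f,i}^\iota)$; thus its $F^\infty$-torsion is finite iff $F \nmid \ff_{f,i}^\iota$, equivalently $F^\iota \nmid \ff_{f,i}$. On the left, a short calculation (write $\ff_{\of,k-i}^\sharp = d A$, $\ff_{\of,k-i}^\flat = d B$ with $d = \gcd(\ff_{\of,k-i}^\sharp,\ff_{\of,k-i}^\flat)$ and $\gcd(A,B)=1$, and observe that $\Lambda^2/\Lambda(A,B)$ is $\Lambda$-torsion-free while $\Lambda(A,B)/d\Lambda(A,B) \cong \Lambda/d\Lambda$) shows that the torsion submodule of $\Lambda^2/\Lambda(\ff_{\of,k-i}^\sharp,\ff_{\of,k-i}^\flat)$ has characteristic ideal $(d)$, so its $F^\infty$-torsion is finite iff $F \nmid \gcd(\ff_{\of,k-i}^\sharp,\ff_{\of,k-i}^\flat)$. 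Combining with the pseudo-isomorphism of the theorem, both conditions are equivalent.

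The only nontrivial step is verifying that $\loc_p$ is injective on $\fZ$ (which is read off from the Poitou--Tate sequence already recorded in Lemma~\ref{lem:quot-Z}) and the torsion computation for $\Lambda^2/\Lambda(a,b)$; everything else is an exact-sequence chase combined with the two preliminary lemmas. The main conceptual point is that the hypotheses $F\nmid\eta_{k,i}$ and $F\nmid\ff_{\of,k-i}$ are exactly what is needed to neutralize, respectively, the twisting factor $\eta_{k,i}$ coming from Lemma~\ref{lem:image-pair} and the fine Selmer contribution $\fZ/\Lambda\bZ_{f,i}$ appearing in the Poitou--Tate sequence.
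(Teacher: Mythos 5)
Your proposal is correct and follows essentially the same route as the paper: the same short exact sequence $0\to \fZ/\Lambda\bZ_{f,i}\to \cZ/(\loc_p(\bZ_{f,i}))\to \cZ/\fZ\to 0$, the same use of $F\nmid\ff_{\of,k-i}$ to kill the first term at the prime $(F)$, and the same appeal to Lemmas~\ref{lem:rank1-finite} and \ref{lem:quot-Z} on the two ends. The only difference is presentational: for the ``in particular'' clause you recompute the torsion submodule of $\Lambda^2/\Lambda(\ff_{\of,k-i}^\sharp,\ff_{\of,k-i}^\flat)$ directly (correctly), whereas the paper simply invokes the finiteness criteria already recorded in the two preliminary lemmas.
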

\begin{proof}
By Lemma \ref{lem:rank1-finite}, we have a pseudo-isomorphism 
\[ \left(\Lambda^2/\Lambda(\ff_{\of,k-i}^\sharp,\ff_{\of,k-i}^\flat)\right)[F^\infty] \sim  \left(\cZ/(\loc_p(\bZ_{f,i}))\right) [F^\infty] \]
of $\Lambda$-modules. Consider the short exact sequence
\[
    0\rightarrow \fZ/\Lambda\bZ_{f,i}\rightarrow \cZ/(\loc_p(\bZ_{f,i}))\rightarrow \cZ/\fZ\rightarrow0.
\]
Since $\fZ/\Lambda\bZ_{f,i}$ is a torsion $\Lambda$-module with  $\Char_\Lambda\left(\fZ/\Lambda\bZ_{f,i}\right) = \ff_{\of,k-i}$ not divisible by $F$, we have the following pseudo-isomorphism of $\Lambda$-modules
\[\left(\cZ/(\loc_p(\bZ_{f,i}))\right)[F^\infty]\sim  \left(\cZ/\fZ\right)[F^\infty] .\]
 By Lemma \ref{lem:quot-Z}, the latter is pseudo-isomorphic to $\Sel_0(A_f(i)/\Qcyc)^{\vee,\iota} [F^\infty]$. On combining these pseudo-isomorphisms, the theorem follows.
\end{proof}

We can now state and prove the main result of the article.

\begin{theorem}\label{thm:LS}
Assume that $\textbf{\upshape{(H-IMC)}}$ and \textbf{\upshape{(H0)}} hold. Let $F$ be an irreducible element of $\Lambda$ such that  $F\nmid \eta_{k,i}$. Then  $F^\iota\nmid \ff_{f,i}$ and $F\nmid \ff_{\of,k-i}$ if and only if $F\nmid\gcd\left(\ff_{\of,k-i}^\sharp,\ff_{\of,k-i}^\flat\right)$. 
\end{theorem}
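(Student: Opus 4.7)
The strategy is to deduce Theorem~\ref{thm:LS} from Theorem~\ref{lem:LS} by showing that the extra hypothesis $F\nmid \ff_{\of,k-i}$ appearing there is actually forced by the gcd condition under \textbf{(H-IMC)}. So the bulk of the work is already done: what remains is a short divisibility argument based on the Poitou--Tate sequence \eqref{eq:PT-signed}.

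For the forward direction, I would simply invoke Theorem~\ref{lem:LS}: if $F^\iota\nmid \ff_{f,i}$ and $F\nmid \ff_{\of,k-i}$, then both hypotheses of that theorem are met, and the ``only if'' part gives $F\nmid \gcd(\ff_{\of,k-i}^\sharp,\ff_{\of,k-i}^\flat)$.

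For the reverse direction, I would first show that the divisibility $\ff_{\of,k-i}\mid \gcd(\ff_{\of,k-i}^\sharp,\ff_{\of,k-i}^\flat)$ always holds under \textbf{(H-IMC)}. This is the key observation; to see it, fix $\bullet\in\{\sharp,\flat\}$ and consider the exact sequence \eqref{eq:PT-signed}. By \eqref{eq:IMC}, which follows from \textbf{(H-IMC)}, the characteristic ideal of $\Image(\Col_{f,i}^\bullet)/(L_p^\bullet)$ equals $(\ff_{\of,k-i}^\bullet)$. Since $\fZ/\Lambda\bZ_{f,i}$ injects into this module, its characteristic ideal $(\ff_{\of,k-i})$ (again by \textbf{(H-IMC)}) divides $(\ff_{\of,k-i}^\bullet)$. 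Taking this over both $\bullet=\sharp$ and $\bullet=\flat$ yields the claimed divisibility, and hence if $F\nmid \gcd(\ff_{\of,k-i}^\sharp,\ff_{\of,k-i}^\flat)$ then automatically $F\nmid \ff_{\of,k-i}$. With this extra condition now in hand, I can apply the ``if'' direction of Theorem~\ref{lem:LS} to conclude $F^\iota\nmid \ff_{f,i}$.

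The main (mild) obstacle is just making sure that the short exact sequence argument is clean: one must remember that \eqref{eq:PT-signed} is a four-term sequence of torsion $\Lambda$-modules, and that $\fZ/\Lambda\bZ_{f,i}$ appears as a \emph{submodule} of $\Image(\Col_{f,i}^\bullet)/(L_p^\bullet)$ — hence its characteristic ideal divides, rather than is divided by, that of the ambient module. After this, the rest of the proof is just a formal reduction to Theorem~\ref{lem:LS}, and no new input (Nekov\'a\v{r}'s spectral sequence, Wach modules, etc.) is required beyond what has already been established.
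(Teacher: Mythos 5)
Your argument is correct and its overall shape is the same as the paper's: both directions are reduced to Theorem~\ref{lem:LS}, and the only new content is that $F\nmid\gcd\left(\ff_{\of,k-i}^\sharp,\ff_{\of,k-i}^\flat\right)$ forces $F\nmid\ff_{\of,k-i}$, which in turn rests on the divisibility $\ff_{\of,k-i}\mid\ff_{\of,k-i}^\bullet$ for each $\bullet$. Where you differ is in how you obtain that divisibility: you read it off the \emph{left} end of the four-term sequence \eqref{eq:PT-signed}, using that $\fZ/\Lambda\bZ_{f,i}$ is a torsion submodule of $\Image\left(\Col_{f,i}^\bullet\right)/(L_p^\bullet)$ together with the two identifications of characteristic ideals \eqref{eq:Kato} and \eqref{eq:IMC} furnished by \textbf{(H-IMC)}. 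The paper reads it off the \emph{right} end: the inclusion $\Sel_0(A_\of(k-i)/\Qcyc)\subset\Sel^\bullet(A_\of(k-i)/\Qcyc)$ exhibits $\Sel_0(A_\of(k-i)/\Qcyc)^\vee$ as a quotient of $\Sel^\bullet(A_\of(k-i)/\Qcyc)^\vee$, giving the same divisibility in one line and without invoking the main conjecture for this particular step. Since \textbf{(H-IMC)} is a standing hypothesis of the theorem, both routes are legitimate; the paper's is marginally more economical and isolates the minimal input needed, while yours stays entirely on the Coleman-map side of the Poitou--Tate sequence. The one point to make explicit in your version is that the multiplicativity of characteristic ideals you invoke requires the ambient module $\Image\left(\Col_{f,i}^\bullet\right)/(L_p^\bullet)$ to be $\Lambda$-torsion, which is precisely where the non-vanishing of $L_p^\bullet$ assumed in \textbf{(H-IMC)} enters.
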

\begin{proof}
  Theorem \ref{lem:LS} tells us that if $F^\iota\nmid\ff_{f,i}$ and $F\nmid \ff_{\of,k-i}$, then  $F\nmid\gcd\left(\ff_{\of,k-i}^\sharp,\ff_{\of,k-i}^\flat\right)$. This proves the "only if" implication.

  Conversely, suppose that $F\nmid\gcd\left(\ff_{\of,k-i}^\sharp,\ff_{\of,k-i}^\flat\right)$. From the inclusion
\[
\Sel_0(A_\of(k-i))\subset \Sel^\bullet(A_\of(k-i)),
\] we have $F\nmid \ff_{\of,k-i}$. The remaining assertion that $F^\iota\nmid \ff_{f,i}$ now follows from Theorem \ref{lem:LS}.
\end{proof}
\begin{remark}
Let $E/\QQ$ be an elliptic curve with good supersingular reduction at $p$. In \cite{leisuj}, it is stated in the proof of Proposition~3.1 that  by \cite[Corollary~2.5]{wingberg}, we have the equality
\[
 \Char_\Lambda\left(\Sel_{p^\infty}(E/\Qcyc)^\vee\right)_\tor=\Char_\Lambda\Sel_{0}(E/\Qcyc)^\vee.
\]
However, one of the two Selmer groups should be twisted by $\iota$ in order for the equality to hold. As such, for the rest of the proof to go through,  the additional hypothesis that the irreducible element $f$ (not to be confused with the notation for a modular form in the present article) satisfies $(f)=(f^\iota)$ is required. Consequently, the statement of Theorem~1.2 in op. cit. should also be modified. 
\end{remark}
Specializing our Theorem~\ref{thm:LS} to the case $F=\varpi$, where $\varpi$ is a uniformizer of $\cO$, we may deduce the following:

\begin{corollary}
\label{cor:LS}
Assume that $\textbf{\upshape{(H-IMC)}}$ and \textbf{\upshape{(H0)}} hold. Then the following statements are equivalent.

(a) The $\mu$-invariants of $\Sel_0(A_f(i)/\Qcyc)^\vee$ and $\Sel_0(A_\of(k-i)/\Qcyc)^\vee$ are zero.

(b) At least one of $\Sel^\sharp(A_\of(k-i)/\Qcyc)^\vee$ and $\Sel^\flat(A_\of(k-i)/\Qcyc)^\vee$ has trivial $\mu$-invariant.
\end{corollary}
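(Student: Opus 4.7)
The plan is to deduce the corollary by applying Theorem~\ref{thm:LS} with $F = \varpi$. First I would verify that $\varpi$ meets the hypotheses of that theorem: it is irreducible in $\Lambda = \cO\lb X\rb$ (being a prime of the coefficient ring), and $\varpi \nmid \eta_{k,i}$, since $\eta_{k,i}$ is the image of the distinguished polynomial $\prod_{j=0}^{k-2}(X-u^j+1)$ under the twisting automorphism $\Tw^{i-k+1}$, and the twisting map $X \mapsto u^{i-k+1}(1+X)-1$ is a $\cO$-algebra automorphism of $\Lambda$ that preserves $\mu$-invariants and in particular the property of not being divisible by $\varpi$.

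Next I would observe that $\varpi^\iota = \varpi$, because the involution $\iota$ acts only on the Iwasawa variable (sending $1+X$ to $(1+X)^{-1}$) and leaves elements of $\cO$ fixed. Using the standard characterization that a finitely generated torsion $\Lambda$-module $M$ with characteristic element $f_M$ has $\mu(M)=0$ if and only if $\varpi \nmid f_M$, the conditions $F^\iota \nmid \ff_{f,i}$ and $F \nmid \ff_{\of,k-i}$ translate respectively into the statements $\mu(\Sel_0(A_f(i)/\Qcyc)^\vee)=0$ and $\mu(\Sel_0(A_\of(k-i)/\Qcyc)^\vee)=0$, which is precisely condition (a) in the corollary.

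Finally, for the right-hand side of Theorem~\ref{thm:LS}, I would use the elementary fact that $\varpi \nmid \gcd(\ff_{\of,k-i}^\sharp, \ff_{\of,k-i}^\flat)$ if and only if $\varpi$ fails to divide at least one of $\ff_{\of,k-i}^\sharp$ and $\ff_{\of,k-i}^\flat$, which via the same $\mu$-invariant characterization is exactly condition (b). The equivalence (a) $\Leftrightarrow$ (b) is then an immediate consequence of Theorem~\ref{thm:LS}. The main obstacle here is essentially bookkeeping: one must be careful that the twist by $\iota$ does not interfere (handled because $\varpi$ is $\iota$-fixed) and that the ``excluded'' factor $\eta_{k,i}$ is indeed $\varpi$-coprime (handled by its being a twist of a distinguished polynomial). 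No further content beyond Theorem~\ref{thm:LS} and standard Iwasawa-theoretic facts about $\mu$-invariants is needed.
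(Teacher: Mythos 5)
Your proposal is correct and is exactly the paper's argument: the paper deduces the corollary by specializing Theorem~\ref{thm:LS} to $F=\varpi$, and your verifications (irreducibility of $\varpi$, $\varpi\nmid\eta_{k,i}$ since $\eta_{k,i}$ is a twist of a distinguished polynomial, $\varpi^\iota=\varpi$, and the translation of $\varpi$-divisibility of characteristic elements into vanishing of $\mu$-invariants) are precisely the bookkeeping the paper leaves implicit.
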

\begin{remark}
Recall from \cite[Conjecture~A]{CoatesSujatha_fineSelmer}, \cite[Conjecture~A]{jhasuj} and \cite[Conjecture~1.2]{aribam2014mu} that the $\mu$-invariants of $\Sel_0(A_f(i)/\Qcyc)^\vee$ and $\Sel_0(A_\of(k-i)/\Qcyc)^\vee$ are predicted to always vanish. Corollary~\ref{cor:LS} gives an alternative formulation of these conjectures in terms of signed Selmer groups. In fact, numerical calculations carried out by Pollack (see \cite[\S7]{pollack03} and \url{http://math.bu.edu/people/rpollack/Data/data.html}) suggest that the $\mu$-invariants of both $\Sel^\sharp(A_\of(k-i)/\Qcyc)^\vee$ and $\Sel^\flat(A_\of(k-i)/\Qcyc)^\vee$ are zero when $A_\of(k-i)$ is given by $E[p^\infty]$ for some elliptic curve $E/\QQ$ with good supersingular reduction at $p$.
\end{remark}

\section*{acknowledgement}
We would like to thank the anonymous referee
for providing various helpful comments that have helped us improve the exposition of this article.


\bibliographystyle{abbrv}
\bibliography{references}

\end{document}